\newtheorem{theorem}{Theorem}
\newtheorem{lemma}{Lemma}
\newtheorem{corollary}{Corollary}
\newtheorem{definition}{Definition}
\title{Growth function for an $n$-valued dynamics}
\author{M. Chirkov \thanks{mikhlchirkov@gmail.com}}
\affil{Centre of Integrable Systems, P.G. Demidov Yaroslavl State University, Yaroslavl, Russia}
\date{}
\begin{document}
\maketitle
\begin{abstract}
This article answers the question of V.M. Buchstaber about the growth function of a particular $n$-valued group. This question is closely related to discrete integrable systems. In this paper, we will find a formula for the growth function in the case when $n$ is prime. In addition, we will prove a polynomial asymptotic estimate for the growth function in the general case. Finally, we will pose new problems and hypotheses about growth functions.
\end{abstract}
\section{Introduction}
The theory of $n$-valued groups was introduced by S. P. Novikov and V. M. Buchstaber in 1971 \cite{Buhtaber1971} in connection with the study of the theory of characteristic classes of vector bundles, in which the product of two elements is an $n$-multiset with multiplicities taken into account. Later, the theory of 2-valued formal groups was developed, which found applications to addition theorems related to elliptic functions \cite{Bukhshtaber1990}. Further, V. M. Buchstaber developed the topological and algebraic theory of $n$-valued groups, thanks to which it was possible to find a solution \cite{Buchstaber2004} of the problem of rings of functions on a space of symmetric power.

We introduce an $n$-valued multiplication \cite{Buchstaber2006} over the field $\mathbb{C}$ by the following rule:
\begin{equation}
    x \ast y = [(\sqrt[n]{x} + \varepsilon^r \sqrt[n]{y})^n, r=1,\ldots,n], \text{ where } \varepsilon = e^{\frac{2 \pi i}{n}}.
\end{equation}
Consider the function $T: \mathbb{C} \to (\mathbb{C})^n$, which defines the $n$-valued dynamics \cite{Veselov1991, Buchstaber1996} as follows:
\begin{equation}\label{dynamics}
    T(y) = x \ast y \text{ for some fixed } x \in \mathbb{C}.
\end{equation}
Then, in a natural way, we can define $T^k(y)$ as the result of applying the function $T$ to all elements of $T^{k-1}(y)$. For example, $T^2(y) = x \ast (x \ast y)$.
The question that was first posed by V. M. Buchstaber in his work \cite{Buchstaber2006} is how many different elements will be in the set $\{ 0 \} \, \cup \, T(0) \cup \ldots \cup \, T ^k(0)$. Let us denote this number $P_n(k)$. This function is related to counting the number of elements of the cyclic $n$-valued group $\langle x \rangle$ generated by an arbitrary $x \in \mathbb{C}$ after $k$ iterations. The dynamics of a cyclic $n$-valued group was first studied in \cite{Veselov1992}. V.M. Buchstaber and A.P. Veselov found \cite{Buchstaber2019} an application of the theory of 2-valued groups and their dynamics to the classical problem of adding points on elliptic curves.

In the case of a free group without bonds, growth with such dynamics turns out to be exponential. The fact of polynomial growth is not accidental and is related to the integrability of the corresponding systems with discrete time. An integrable system closely related to the 2-valued group is an integrable billiard \cite{Buchstaber2018}.

In the next section, we will introduce all the necessary definitions for describing the dynamics of \eqref{dynamics}. In Section 3, we prove an explicit formula for the growth function in the case where $n$ is a prime number. Then, in section 4, some results of numerical calculations of growth polynomials will be described. Finally, in section 5 we will make some new hypotheses and pose some questions.

\section{Description of the dynamics}
Note that $T(0) = [x, \ldots, x]$, i.e. on the first iteration the new element is only one, namely $x$. We assume that $P_n(0) = 1$, since the element $0$ is put there by definition, and $P_n(1) = 2$, since we always get only $x$ on the first iteration. Let us now look at the second iteration:
\begin{equation*}
    (\sqrt[n]{x} + \varepsilon^r \sqrt[n]{x})^n = x (1 + \varepsilon^r)^n.
\end{equation*}
That is, at the second iteration, only elements of the form $\alpha x$ appear, where $\alpha \in \mathbb{C}$. At the next iteration, we have
\begin{equation*}
    (\sqrt[n]{x} + \varepsilon^r \sqrt[n]{\alpha x})^n = x (1 + \varepsilon^r \sqrt[n]{k})^n.
\end{equation*}
Thus, we again obtain elements of the form $\overline{\alpha} x$. So $x$ can always be taken out of the bracket and only the following dynamics can be considered
\begin{equation*}
    \alpha_{r_1,\ldots,r_k} = (1 + \varepsilon^{r_k} \sqrt[n]{\alpha_{r_1, \ldots, r_{k-1}}})^n,
\end{equation*}
since the new number in dynamics is given by the number from the previous iteration and the choice of the parameter $r$. If we write down all the previous members of the sequence, we obtain:
\begin{equation}
    \alpha_{r_1,\ldots,r_k} = (1 + \varepsilon^{r_k} + \varepsilon^{r_k + r_{k-1}} + \ldots + \varepsilon^{r_k + r_{k-1} + \ldots + r_1})^n.
\end{equation}
Further, it will be convenient for us to change the parametrization as follows.
\begin{lemma}\label{lemma1}
    For any $\{a_i\} \in \mathbb{Z}^{k+1}_n$ there exists $\{r_i\} \in \mathbb{Z}^k_n$, such that
    $$
    \alpha_{r_1,\ldots,r_k} = (\varepsilon^{a_1} + \varepsilon^{a_2} + \ldots + \varepsilon^{a_{k+1}})^n
    $$
\end{lemma}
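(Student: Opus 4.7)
The plan is to reduce the target expression $(\varepsilon^{a_1} + \varepsilon^{a_2} + \cdots + \varepsilon^{a_{k+1}})^n$ to the exact form of $\alpha_{r_1,\ldots,r_k}$ by factoring out $\varepsilon^{a_1}$ from the sum. Since $\varepsilon^n = 1$, the prefactor $\varepsilon^{n a_1}$ disappears, leaving
\[
(\varepsilon^{a_1} + \varepsilon^{a_2} + \cdots + \varepsilon^{a_{k+1}})^n = (1 + \varepsilon^{a_2 - a_1} + \varepsilon^{a_3 - a_1} + \cdots + \varepsilon^{a_{k+1} - a_1})^n,
\]
where all exponents are read modulo $n$. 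This already has the shape of the displayed formula for $\alpha_{r_1,\ldots,r_k}$, so it only remains to read off the $r_i$.

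Next I would match the two expressions term by term. The $j$-th summand inside the bracket of $\alpha_{r_1,\ldots,r_k}$ is $\varepsilon^{r_k + r_{k-1} + \cdots + r_{k-j+1}}$, and I want this to equal $\varepsilon^{a_{j+1} - a_1}$ for every $j = 1,\ldots,k$. Introducing the partial sums $s_j = r_k + \cdots + r_{k-j+1}$, the requirement becomes $s_j \equiv a_{j+1} - a_1 \pmod{n}$. Telescoping, $r_{k-j+1} = s_j - s_{j-1} = a_{j+1} - a_j \pmod{n}$, so the explicit choice
\[
r_i \;\equiv\; a_{k-i+2} - a_{k-i+1} \pmod{n}, \qquad i = 1,\ldots,k,
\]
lies in $\mathbb{Z}_n^k$ and makes the two bracketed sums coincide term by term.

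There is essentially no hard step here; the only thing to be careful about is index bookkeeping and the systematic use of $\varepsilon^n = 1$ to absorb the factor $\varepsilon^{n a_1}$ and to regard all $r_i, a_j$ as elements of $\mathbb{Z}_n$ rather than $\mathbb{Z}$. The lemma can therefore be stated as an explicit bijective change of variables $(r_1,\ldots,r_k) \leftrightarrow (a_1,\ldots,a_{k+1})$ modulo the global shift $a_j \mapsto a_j + c$ (which leaves the target expression unchanged), which will also be useful in the sequel for counting distinct values of $\alpha$.
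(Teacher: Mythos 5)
Your proof is correct and follows essentially the same route as the paper: factor out $\varepsilon^{a_1}$, absorb $\varepsilon^{n a_1}$ using $\varepsilon^n=1$, and solve the resulting triangular system by telescoping, arriving at the same explicit solution $r_{k-j+1}\equiv a_{j+1}-a_j \pmod n$. No gaps.
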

\begin{proof}
    We use the fact that $\varepsilon^n = 1$, and multiply $(\varepsilon^{a_1} + \varepsilon^{a_2} + \ldots + \varepsilon^{a_{k+1}})^n$ by $\varepsilon^{-a_1 n}$:
    $$
    (\varepsilon^{a_1} + \varepsilon^{a_2} + \ldots + \varepsilon^{a_{k+1}})^n = (1 + \varepsilon^{a_2 - a_1} + \varepsilon^{a_3 - a_1} + \ldots + \varepsilon^{a_{k+1} - a_1})^n.
    $$
    Thus, we obtain the following system of equalities modulo $n$:
    $$
    \begin{cases}
    r_k \equiv a_2 - a_1, \\
    r_k + r_{k-1} \equiv  a_3 - a_1, \\
    \ldots \\
    r_k + \ldots + r_1 \equiv a_{k+1} - a_1
    \end{cases}.
    $$
    It possesses the folowing solution:
    $$
    \begin{cases}
    r_k = a_2 - a_1 \mod n, \\
    r_{k-1} = a_3 - a_2 \mod n, \\
    r_{k-2} = a_4 - a_3 \mod n, \\
    \ldots \\
    r_1 = a_{k+1} - a_k \mod n
    \end{cases}
    $$
\end{proof}
Now define $A_j \stackrel{def}{=} \{ (\varepsilon^{a_1} + \ldots + \varepsilon^{a_j})^n \mid a_i \in \mathbb{Z}_n \}$, $ A_0 \stackrel{def}{=} \{ 0 \}$. Then, by Lemma $\ref{lemma1}$, we have $P_n(k) = | A_0 \cup \ldots \cup A_k | $. Let us split these sets into disjoint ones: $\widetilde{A_j} \stackrel{def}{=} A_j \setminus {A_0 \cup \ldots \cup A_{j-1}}$, thus $P_n(k) = |\widetilde{A_0}| + \ldots + |\widetilde{A_k}|$. It will be convenient to introduce function $Q_n(k) \stackrel{def}{=} P_n(k) - P_n(k-1) = |\widetilde{A_k}|$. It shows how many elements are added at the new step of dynamics.

Next, we focus on the description of the elements of the set $\widetilde{A_j}$ in the case where $n$ is a prime number, which will be denoted by $p$ in what follows.

\section{Main result for prime $p$}

The elements of the set $A_k$ can be specified a bit differently using  grouping by powers of $\varepsilon$:
$$
A_k = \{ (b_0 + b_1 \varepsilon + \ldots + b_{p-1} \varepsilon^{p-1})^p \mid b_0 + \ldots + b_{p-1} = k, \ b_i \in \mathbb{Z}_{\geq 0}\}
$$
The following classical definitions follow naturally from this notation.
\begin{definition}
An ordered set of numbers $\lambda = (b_0, \ldots, b_{n-1})$ is called a composition of $m$ into $n$ parts, if $b_i \in \mathbb{Z}_{>0}$ and $b_0 + \ldots + b_{n-1} = m$.
\end{definition}
\begin{definition}
    $C(m, n)$ is the set of compositions of $m$ on $n$ positive terms.
\end{definition}
Number compositions are a classical constructions. It is known that $|C(m, n)| = {{m-1} \choose {n-1}}$. It can be seen from our construction that we need to allow $b_i = 0$, so the following definitions are required.

\begin{definition}
An ordered set of numbers $\lambda = (b_0, \ldots, b_{n-1})$ is called a weak composition of $m$ into $n$ parts, if $b_i \in \mathbb{Z}_{\geq 0} $ and $b_0 + \ldots + b_{n-1} = m$.
\end{definition}
\begin{definition}
   $\overline{C(m, n)}$ is the set of weak compositions of $m$ on $n$ non-negative terms.
\end{definition}
\begin{lemma}\label{lemma2}
    $C(m+n, n) \cong \overline{C(m, n)}$
\end{lemma}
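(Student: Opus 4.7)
The plan is to exhibit an explicit bijection between the two sets by the classical ``add one to each part'' trick, which is the only natural map in sight. Concretely, I would define
\[
\Phi : \overline{C(m,n)} \longrightarrow C(m+n,n), \qquad \Phi(b_0,\ldots,b_{n-1}) = (b_0+1,\ldots,b_{n-1}+1),
\]
and check that it is well defined and bijective.

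For well-definedness, if $(b_0,\ldots,b_{n-1}) \in \overline{C(m,n)}$, then each $b_i \geq 0$, so $b_i + 1 \geq 1$, and the sum of the new entries is $(b_0 + \cdots + b_{n-1}) + n = m + n$; thus $\Phi(b_0,\ldots,b_{n-1}) \in C(m+n,n)$. For the inverse, I would write down
\[
\Psi : C(m+n,n) \longrightarrow \overline{C(m,n)}, \qquad \Psi(c_0,\ldots,c_{n-1}) = (c_0-1,\ldots,c_{n-1}-1),
\]
and observe that $c_i \geq 1$ forces $c_i - 1 \geq 0$, while the sum becomes $(m+n) - n = m$; so $\Psi$ lands in $\overline{C(m,n)}$.

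It is then immediate from the definitions that $\Phi \circ \Psi$ and $\Psi \circ \Phi$ are the respective identity maps, which gives the claimed bijection. There is no real obstacle here; the only thing to be careful about is that the paper's convention for compositions requires strictly positive parts, whereas weak compositions allow zeros, and the shift by $1$ is precisely what translates between the two conventions.
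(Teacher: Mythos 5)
Your proof is correct and is essentially the same as the paper's: both establish the bijection by adding $1$ to each part of a weak composition (and subtracting $1$ for the inverse). Your version is slightly more explicit about well-definedness and the composition of the two maps, but there is no substantive difference.
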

\begin{proof}
Any composition on the left side can be associated with a weak composition on the right, if 1 is subtracted from each term. Similarly, to the right side, any weak composition on the right side can be compared to a composition on the left if one is added to each term.
\end{proof}

Hence, we obtain that $| \overline{C(m, n)} | = {{m + n - 1} \choose {n-1}}$. Next, we introduce the following equivalence relation. We say that $z_1 \sim z_2$, if $z_1 = \varepsilon z_2$. We need this equivalence relation to get rid of exponentiation to the power $p$ of elements of the set $A_k$, because $z_1 \sim z_2 \iff z_1^p = z_2^p$. So we can define the following set 
$$B_k \stackrel{def}{=} \{ b_0 + b_1 \varepsilon + \ldots + b_{p-1} \varepsilon^{p-1} \mid b_0 + \ldots + b_{p-1} = k, \ b_i \in \mathbb{Z}_{\geq 0}\}.$$
In connection with the introduced equivalence relation, we have that $A_k = B_k/ z \sim \varepsilon z$. Now we want to describe the elements of the set $\widetilde{B_k}$, which is defined analogously to $\widetilde{A_k}$ by splitting into a disjunctive union.
In addition, we need the concept of cyclic composition, because the multiplication of an element of a set $B_k$ by $\varepsilon$ corresponds to a cyclic shift $(b_{p-1}, b_0, \ldots, b_{p-2})$.
\begin{definition}
    $
\left\langle
    \begin{matrix}
      m  \\
      n  \\
    \end{matrix}
\right\rangle
$ is the number of equivalence classes $C(m, n)$ relative to cyclic shift.
\end{definition}
To calculate the number of equivalence classes, the formula is known \cite{KnopfmacherRobbins}:
\begin{equation}
\left\langle
    \begin{matrix}
      m  \\
      n  \\
    \end{matrix}
\right\rangle
= \dfrac{1}{m} \sum\limits_{j | \gcd( m, n) } \varphi(j) {m/j \choose n/j}.
\end{equation}
\begin{lemma}\label{lemma3}
    Each element of the set $\widetilde{A_k}$ is uniquely represented by the weak composition equivalence class $(b_0, \ldots, b_{p-1})$ relative to cyclic shift, and in this representation there is an index $j$, that $b_j = 0$.
\end{lemma}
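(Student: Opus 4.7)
My plan is to reduce the question to a linear-algebra statement about weak compositions by exploiting the fact that, for $p$ prime, $\mathbb{Z}[\varepsilon]$ admits very restricted $\mathbb{Z}$-linear relations. The crucial ingredient is that the minimal polynomial of $\varepsilon$ over $\mathbb{Q}$ is the $p$-th cyclotomic polynomial $\Phi_p(x) = 1 + x + \cdots + x^{p-1}$, so the only $\mathbb{Z}$-linear relation among $1,\varepsilon,\ldots,\varepsilon^{p-1}$ is $1+\varepsilon+\cdots+\varepsilon^{p-1}=0$ and its integer multiples. I would use this to conclude that $\sum b_i \varepsilon^i = \sum b_i' \varepsilon^i$ in $\mathbb{C}$ if and only if $b_i - b_i'$ is a constant independent of $i$.

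Next I would combine this with the identity $z_1^p = z_2^p \iff z_1 = \varepsilon^j z_2$ for some $j \in \mathbb{Z}_p$, and the observation that multiplication by $\varepsilon^j$ is a cyclic shift, since $\varepsilon^j \sum b_i \varepsilon^i = \sum b_{i-j}\varepsilon^i$ (indices mod $p$). This gives the master criterion: two weak compositions $(b_i)$ of $k$ and $(b_i')$ of $k'$ produce the same $p$-th power if and only if there exist $j \in \mathbb{Z}_p$ and $c \in \mathbb{Z}$ with $b_i' = b_{i-j} + c$ for all $i$. Summing coordinates, $k' = k + pc$, so representatives of the same element of $A$ appear at sums differing by multiples of $p$.

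Now both parts of the lemma follow in tandem. Fix $z^p \in A_k$ with $z = \sum b_i \varepsilon^i$ and $\sum b_i = k$. By the master criterion, $z^p$ lies in some earlier $A_{k'}$ with $k' < k$ if and only if one can choose $c < 0$ with $b_{i-j} + c \geq 0$ for all $i$, equivalently $\min_i b_i \geq 1$. Therefore $z^p \in \widetilde{A_k}$ if and only if $\min_i b_i = 0$, i.e.\ some $b_j = 0$; this gives the second assertion. For uniqueness, if $(b_i)$ and $(b_i')$ are two weak compositions of the same $k$, both containing a zero, then $b_i' = b_{i-j} + c$ together with equal sums forces $c = 0$, so $(b_i')$ is a pure cyclic shift of $(b_i)$. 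Hence the assignment $z^p \mapsto$ its weak composition descends to a well-defined bijection between $\widetilde{A_k}$ and cyclic equivalence classes of weak compositions of $k$ containing at least one zero.

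The main obstacle is the integrality statement of the first paragraph: that the only $\mathbb{Z}$-relation among $1,\varepsilon,\ldots,\varepsilon^{p-1}$ is the cyclotomic one. This is exactly where primality of $p$ enters; for composite $n$, additional relations of the form $1+\varepsilon^{n/q}+\cdots+\varepsilon^{(q-1)n/q}=0$ for primes $q\mid n$ destroy the clean uniqueness of a ``minimal'' representative, which presumably explains why only asymptotic estimates are claimed in the general case.
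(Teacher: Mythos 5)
Your proof is correct and rests on the same two facts as the paper's: the relation $1+\varepsilon+\cdots+\varepsilon^{p-1}=0$ (used to subtract off the minimum) and the linear independence of $p-1$ of the powers of $\varepsilon$ coming from $[\mathbb{Q}(\varepsilon):\mathbb{Q}]=\varphi(p)=p-1$, i.e.\ that $\Phi_p$ is the minimal polynomial. Packaging them into the single criterion $b_i'=b_{i-j}+c$ is a tidy reorganization --- it also makes explicit that a zero-containing weak composition of $k$ cannot collide with one of smaller sum, a point the paper leaves implicit --- but the route is essentially the paper's.
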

\begin{proof}
    To begin with, suppose that, on the contrary, there is a number $z_0 = \tilde{b}_0 + \ldots + \tilde{b}_{p-1} \varepsilon^{p-1} \in \widetilde{B_k}$ (the same as $z^n_0 \in \widetilde{A_k}$), that $\min b_i = d > 0$. For any $p$ (not only prime number) there is always a relation on the roots of unity $1 + \varepsilon + \ldots + \varepsilon^{p-1} = 0$, then $z_0 = (\tilde{b}_0 - d) + \ldots + (\tilde{b}_{p-1} - d) \varepsilon^{p-1}$. Hence we get that $z_0 \in B_{k - p d}$, which is contrary to $z_0 \in \widetilde{B_k}$, because by construction we have $\widetilde{B_k} \cap B_{k - p d} = \varnothing$.

    Let us now show that the representation is unique up to a cyclic shift of $(b_0, \ldots, b_{p-1})$. Assume, on the contrary, that $z_0 = \hat{b}_0 + \ldots + \hat{b}_{p-1} \varepsilon^{p-1} = \hat{d}_0 + \ldots + \hat{d}_{p-1} \varepsilon^{p-1}$, where $(\hat{b}_0, \ldots, \hat{b}_{p-1})$ and $(\hat{d}_0, \ldots, \hat{d}_{p-1})$ are different non-equivalent weak compositions of $k$ on $p$ parts, and without loss of generality we can assume that $\hat{b}_{0} = 0$ and $\hat{d}_{0} = 0$. Subtracting one representation from the other, we obtain that
    $$
    0 = (\hat{b}_1 - \hat{d}_1) \varepsilon + \ldots + (\hat{b}_{p-1} - \hat{d}_{p-1}) \varepsilon^{p-1}.
    $$
    From the theory of cyclotomic fields \cite{Ireland1982} we have, that the degree of field extension $\mathbb{Q}(\varepsilon)$, where $\varepsilon = e^{2 \pi i / n}$ is equal to $\varphi(n)$, where $\varphi$ is the Euler totient function. Since we are working with a prime number $p$, we have $\varphi(p) = p-1$. It means that $p-1$ successive roots of unity $\varepsilon, \varepsilon^2, \ldots, \varepsilon^{p-1}$ are linearly independent over the field $\mathbb{Q}$. Hence we get a contradiction, since $\hat{b}_i = \hat{d}_i, i = 0, \ldots, p-1$.
\end{proof}
In connection with the lemma just proved, it is useful to introduce the following definition.
\begin{definition}
    We denote $\widetilde{C(k, p)}$ as the set of equivalence classes with respect to cyclic shifts of weak compositions $\overline{C(k, p)}$, that contain at least one zero.
\end{definition}
From lemma \ref{lemma3} we obtain that $\widetilde{A_k} \cong \widetilde{C(k, p)}$.
\begin{lemma}
$\widetilde{C(k, p)} \ \sqcup \ C(k, p)/\!\sim \ \cong C(k+p, p)/\!\sim$, where $\sim$ is an equivalence relation with respect to cyclic shift.
\end{lemma}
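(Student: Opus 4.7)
The plan is to transport the statement across the bijection of Lemma~\ref{lemma2} and then classify cyclic equivalence classes of weak compositions by whether a zero coordinate is present.

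First I would recall that the map $\overline{C(k, p)} \to C(k+p, p)$ from Lemma~\ref{lemma2} adds $1$ to every coordinate. This operation is a uniform shift of all entries, so it commutes with any permutation of the coordinates; in particular it commutes with the cyclic shift $(b_0, \ldots, b_{p-1}) \mapsto (b_{p-1}, b_0, \ldots, b_{p-2})$. Hence the bijection descends to the quotients and gives $\overline{C(k, p)}/\!\sim \; \cong \; C(k+p, p)/\!\sim$.

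Next I would partition $\overline{C(k, p)}/\!\sim$ into two pieces according to whether a representative contains a zero entry. The property ``at least one coordinate equals zero'' depends only on the multiset of entries, and is therefore invariant under cyclic shift, so it descends to a well-defined dichotomy on equivalence classes. The classes that do contain a zero form, by definition, the set $\widetilde{C(k, p)}$. The classes whose representatives are strictly positive in every coordinate are precisely cyclic equivalence classes of honest compositions of $k$ into $p$ positive parts, namely $C(k, p)/\!\sim$. Combining these two pieces with the bijection of the previous paragraph yields the claimed decomposition $\widetilde{C(k, p)} \; \sqcup \; C(k, p)/\!\sim \; \cong \; C(k+p, p)/\!\sim$.

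I expect essentially no serious obstacle: the argument is bookkeeping once one observes that the Lemma~\ref{lemma2} bijection is equivariant under cyclic shifts and that the zero/no-zero dichotomy is shift-invariant. The only small subtlety is making sure that the two subsets on the left-hand side really are disjoint, which is automatic since a composition in $C(k, p)$ has all entries positive and hence cannot lie in $\widetilde{C(k, p)}$.
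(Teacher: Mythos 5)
Your proposal is correct and follows essentially the same route as the paper: split the cyclic classes of weak compositions of $k$ into those with a zero entry ($\widetilde{C(k,p)}$) and those without ($C(k,p)/\!\sim$), then transport via the Lemma~\ref{lemma2} bijection. Your explicit check that the add-one map commutes with cyclic shift is a detail the paper leaves implicit, but the argument is the same.
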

\begin{proof}
    The disjoint union symbol does indeed take place here, because ordinary compositions contain only positive integers by definition, but representatives of equivalence classes from $\widetilde{C(k, p)}$ have at least one zero. In total, the left side contains the equivalence classes of arbitrary weak compositions of $k$ on $p$ parts. Further on the lemma $\ref{lemma2}$ we establish the final isomorphism.
\end{proof}
Hence, we obtain that $$Q_k(p) = | \widetilde{A_k} | = | \widetilde{C(k, p)} | = | C(k+p, p)/\!\sim| \ - \ | C(k, p)/\!\sim | = \left\langle
    \begin{matrix}
      k+p  \\
      p  \\
    \end{matrix}
\right\rangle - \left\langle
    \begin{matrix}
      k  \\
      p  \\
    \end{matrix}
\right\rangle.$$ 
When $k < p$, it is natural to set $\left\langle
    \begin{matrix}
      k  \\
      p  \\
    \end{matrix}
\right\rangle = {k \choose p} = 0$. Thus, we have proved the following theorem.

\begin{theorem}
    If $p$ is a prime number, then $Q_p(k) = \left\langle
    \begin{matrix}
      k+p  \\
      p  \\
    \end{matrix}
\right\rangle - \left\langle
    \begin{matrix}
      k  \\
      p  \\
    \end{matrix}
\right\rangle$.
\end{theorem}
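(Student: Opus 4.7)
The plan is to unwind the chain of identifications built up in the preceding lemmas and then invoke the Knopfmacher--Robbins formula for the count of cyclic composition classes. The key observation motivating everything is that grouping $a_1,\ldots,a_{k+1}$ by their value in $\mathbb{Z}_p$ turns an element $(\varepsilon^{a_1}+\ldots+\varepsilon^{a_{k+1}})^p$ into an expression indexed by a weak composition $(b_0,\ldots,b_{p-1}) \in \overline{C(k{+}1,p)}$ (after shifting indices appropriately), so that combinatorics on weak compositions modulo cyclic shift controls the cardinality of $\widetilde{A_k}$.

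First I would pass from $A_k$ to $B_k$ using the equivalence $z_1 \sim z_2 \iff z_1 = \varepsilon z_2$, which on coefficients amounts to the cyclic shift $(b_0,\ldots,b_{p-1})\mapsto(b_{p-1},b_0,\ldots,b_{p-2})$. Then I would invoke Lemma~\ref{lemma3} to identify $\widetilde{A_k}$ with $\widetilde{C(k,p)}$: the relation $1+\varepsilon+\ldots+\varepsilon^{p-1}=0$ forces any class representing a genuinely new element to contain a zero entry, and linear independence of $\varepsilon,\varepsilon^2,\ldots,\varepsilon^{p-1}$ over $\mathbb{Q}$ (coming from $\varphi(p)=p-1$) guarantees that the representation is unique up to cyclic shift.

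Second, I would decompose the set of cyclic classes of all weak compositions: a class either has every entry positive, in which case it lies in $C(k,p)/\!\sim$, or it contains at least one zero, in which case it lies in $\widetilde{C(k,p)}$. This gives
\[
\overline{C(k,p)}/\!\sim \;=\; \widetilde{C(k,p)} \;\sqcup\; C(k,p)/\!\sim.
\]
The bijection of Lemma~\ref{lemma2} (subtract/add $1$ to every entry) is equivariant under cyclic shifts, so it descends to $\overline{C(k,p)}/\!\sim \;\cong\; C(k+p,p)/\!\sim$. Rearranging and counting both sides with the formula for $\left\langle\begin{matrix} m \\ p \end{matrix}\right\rangle$ yields
\[
Q_p(k) = |\widetilde{A_k}| = \left\langle\begin{matrix} k+p \\ p \end{matrix}\right\rangle - \left\langle\begin{matrix} k \\ p \end{matrix}\right\rangle.
\]

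The main obstacle, and the only place primality is essential, is the uniqueness half of Lemma~\ref{lemma3}: for composite $n$ there are additional $\mathbb{Q}$-linear relations among $1,\varepsilon,\ldots,\varepsilon^{n-1}$ coming from cyclotomic polynomials of proper divisors of $n$, and the clean bijection $\widetilde{A_k}\cong\widetilde{C(k,n)}$ fails. This is presumably why the theorem is stated only for primes, and everything else in the argument (Lemma~\ref{lemma2}, the disjoint union decomposition, the closed form for cyclic composition counts) is dimension-free and would carry over verbatim once a suitable replacement for the linear independence step is found.
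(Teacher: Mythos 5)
Your proposal is correct and follows essentially the same route as the paper: reduce $\widetilde{A_k}$ to cyclic classes of weak compositions with a zero entry via the relation $1+\varepsilon+\cdots+\varepsilon^{p-1}=0$ and the linear independence coming from $\varphi(p)=p-1$, then split $\overline{C(k,p)}/\!\sim$ into the zero-containing classes and $C(k,p)/\!\sim$ and apply the shift bijection $\overline{C(k,p)}\cong C(k+p,p)$, which descends to cyclic classes. Your explicit remark that this bijection is equivariant under cyclic shifts is a point the paper leaves implicit; otherwise the two arguments coincide.
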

\begin{corollary}
    If $p$ is a prime number, then $$Q_p(k) = \dfrac{1}{p} \left[ {{k + p - 1} \choose {p-1}} - {{k-1} \choose {p-1}} \right].$$
\end{corollary}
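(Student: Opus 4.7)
The plan is to substitute the explicit formula
$$\left\langle \begin{matrix} m \\ n \end{matrix} \right\rangle = \frac{1}{m}\sum_{j \mid \gcd(m,n)} \varphi(j)\binom{m/j}{n/j}$$
into the expression from the theorem, and then simplify using the identity $\frac{1}{m}\binom{m}{p} = \frac{1}{p}\binom{m-1}{p-1}$. Since $\gcd(k+p,p)=\gcd(k,p)$ and $p$ is prime, this common gcd equals either $1$ or $p$, so the argument naturally splits into two cases.

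In the case $p \nmid k$, only $j=1$ survives in each of the two sums, so we get directly
$$\left\langle \begin{matrix} k+p \\ p \end{matrix} \right\rangle - \left\langle \begin{matrix} k \\ p \end{matrix} \right\rangle = \frac{1}{k+p}\binom{k+p}{p} - \frac{1}{k}\binom{k}{p},$$
and applying the identity above to each term yields exactly $\frac{1}{p}\left[\binom{k+p-1}{p-1} - \binom{k-1}{p-1}\right]$. Here one must be slightly careful: when $k < p$ the second binomial is zero by convention, matching the stipulation after the theorem.

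In the case $p \mid k$, the term $j = p$ also appears in both sums. Since $\varphi(p) = p-1$ and $\binom{(k+p)/p}{1} = (k+p)/p$, this extra contribution to $\left\langle \begin{matrix} k+p \\ p \end{matrix} \right\rangle$ is $\frac{1}{k+p} \cdot (p-1) \cdot \frac{k+p}{p} = \frac{p-1}{p}$, and identically $\frac{p-1}{p}$ is contributed to $\left\langle \begin{matrix} k \\ p \end{matrix} \right\rangle$. These two constants cancel in the difference, and the remaining $j=1$ parts reduce exactly as in the first case.

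I do not expect any genuine obstacle — the only thing to watch is the bookkeeping that the $j=p$ corrections in the divisible case cancel with precisely the same sign and magnitude, so that the final formula is uniform in $k$ regardless of whether $p$ divides $k$. The proof is essentially one line per case after the correct substitution.
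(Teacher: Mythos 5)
Your proposal is correct and follows essentially the same route as the paper: substitute the divisor-sum formula for $\left\langle \begin{smallmatrix} m \\ p \end{smallmatrix} \right\rangle$, split on whether $p \mid k$, observe that the $j=p$ corrections each equal $\tfrac{p-1}{p}$ and cancel in the difference, and finish with $\tfrac{1}{m}\binom{m}{p} = \tfrac{1}{p}\binom{m-1}{p-1}$. No further comment is needed.
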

\begin{proof}
    Since $p$ is prime $\gcd(p, k)$ is equal to 1 or $p$:
    $$
    \left\langle
    \begin{matrix}
      k  \\
      p  \\
    \end{matrix}
\right\rangle = 
\begin{cases}
    \dfrac{1}{k} {k \choose p}, \ p \nmid k, \\ \\
    \dfrac{1}{k} \left( {k \choose p} + \varphi(p) \dfrac{k}{p} \right), \ p \mid k
\end{cases}
    \left\langle
    \begin{matrix}
      k+p  \\
      p  \\
    \end{matrix}
\right\rangle = 
\begin{cases}
    \dfrac{1}{k+p} {{k+p} \choose p}, \ p \nmid k, \\ \\
    \dfrac{1}{k+p} \left( {{k+p} \choose p} + \varphi(p) \dfrac{k + p}{p} \right), \ p \mid k
\end{cases}
    $$
In both cases the same result is obtained:
$$
Q_p(k) = \dfrac{1}{k+p} {{k+p} \choose p} - \dfrac{1}{k} {k \choose p}.
$$
We can expand the binomial coefficients by the definition and get the formula from the statement:
$$
Q_p(k) = \dfrac{1}{k+p} \dfrac{(k+p)!}{k! \ p!} - \dfrac{1}{k} \dfrac{k!}{(k-p)! \ p!} = \dfrac{(k+p-1)!}{k! \ p!} - \dfrac{(k-1)!}{(k-p)! \ p!} = \dfrac{1}{p} \left[ {{k + p - 1} \choose {p-1}} - {{k-1} \choose {p-1}} \right].
$$
\end{proof}
\begin{corollary}
If $p$ is a prime number, then $$P_p(k) = 1 + \sum\limits_{j = 1}^{k} \left\langle
    \begin{matrix}
      j+p  \\
      p  \\
    \end{matrix}
\right\rangle - \left\langle
    \begin{matrix}
      j  \\
      p  \\
    \end{matrix}
\right\rangle = 1 + \dfrac{1}{p} \sum\limits_{j = 1}^{k} {{j+p-1} \choose {p-1}} - {{j-1} \choose {p-1}}.$$
\end{corollary}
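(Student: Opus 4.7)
The plan is essentially to telescope the first difference $Q_p$ and then plug in the two formulas that have already been proved. Specifically, recall that $Q_n(k)$ was defined precisely as $P_n(k) - P_n(k-1)$, so summation of $Q_p(j)$ from $j=1$ to $k$ gives
\begin{equation*}
\sum_{j=1}^{k} Q_p(j) \;=\; \sum_{j=1}^{k} \bigl(P_p(j) - P_p(j-1)\bigr) \;=\; P_p(k) - P_p(0).
\end{equation*}
Since the paper fixes $P_p(0) = 1$ (the set $\{0\}$ has one element), this rearranges to $P_p(k) = 1 + \sum_{j=1}^{k} Q_p(j)$, which is the shape of the claim.

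The first expression in the Corollary then follows immediately by substituting the Theorem's formula $Q_p(j) = \bigl\langle {j+p \atop p} \bigr\rangle - \bigl\langle {j \atop p} \bigr\rangle$ inside the sum. For the second expression I would substitute the previous Corollary's closed form $Q_p(j) = \tfrac{1}{p}\bigl[\binom{j+p-1}{p-1} - \binom{j-1}{p-1}\bigr]$ and pull the factor $1/p$ out of the sum. No convention needs to be checked at the boundary $j=1$ since $\binom{0}{p-1} = 0$ agrees with the earlier convention $\bigl\langle {k \atop p}\bigr\rangle = \binom{k}{p} = 0$ for $k<p$.

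The only real content, the combinatorial identification of $Q_p(k)$, was already handled in the Theorem and its Corollary, so there is no genuine obstacle here: the proof is a one-line telescoping summation followed by two direct substitutions. I would present it in exactly that order, with the telescoping identity displayed and the two forms of the right-hand side obtained by quoting the Theorem and the preceding Corollary respectively.
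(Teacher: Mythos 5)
Your proposal is correct and matches the paper's own argument, which likewise observes that $P_p(k) = P_p(0) + Q_p(1) + \ldots + Q_p(k)$ and substitutes the formulas for $Q_p$ already established. Your write-up is simply a more detailed version of the same one-line telescoping proof.
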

\begin{proof}
    It is because $P_p(k) = P_p(0) + Q_p(1) + \ldots + Q_p(k)$.
\end{proof}
\begin{corollary}
    If $p$ is a prime number, then $Q_p(k) = O(k^{p-2})$ and $P_p(k) = O(k^{p-1})$ for $k \to +\infty$.
\end{corollary}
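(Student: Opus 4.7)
The plan is to deduce both bounds directly from the closed form $Q_p(k) = \frac{1}{p}\bigl[\binom{k+p-1}{p-1} - \binom{k-1}{p-1}\bigr]$ established in the previous corollary, by showing that the bracketed expression is a polynomial in $k$ of degree at most $p-2$.

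First I would write each binomial coefficient as a polynomial in $k$ via its factorial form: $\binom{k+p-1}{p-1} = \frac{(k+1)(k+2)\cdots(k+p-1)}{(p-1)!}$ and $\binom{k-1}{p-1} = \frac{(k-1)(k-2)\cdots(k-p+1)}{(p-1)!}$. Both are polynomials of degree $p-1$ with identical leading term $k^{p-1}/(p-1)!$, so the $k^{p-1}$ contributions cancel in the difference. This already gives $Q_p(k) = O(k^{p-2})$.

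A cleaner presentation, which I would actually write out, uses Pascal's rule telescopically:
$$\binom{k+p-1}{p-1} - \binom{k-1}{p-1} = \sum_{i=0}^{p-1}\left[\binom{k+i}{p-1} - \binom{k+i-1}{p-1}\right] = \sum_{i=0}^{p-1}\binom{k+i-1}{p-2},$$
exhibiting $Q_p(k) = \frac{1}{p}\sum_{i=0}^{p-1}\binom{k+i-1}{p-2}$ as a sum of $p$ explicit polynomials of degree $p-2$ in $k$. For the bound on $P_p$ I would then invoke $P_p(k) = 1 + \sum_{j=1}^{k} Q_p(j)$ from the preceding corollary; summing a polynomial of degree $p-2$ over $j = 1, \ldots, k$ produces a polynomial of degree $p-1$ (concretely, via the hockey-stick identity $\sum_{j=1}^{k}\binom{j+i-1}{p-2} = \binom{k+i}{p-1} - \binom{i}{p-1}$ applied to each summand of the telescope), and hence $P_p(k) = O(k^{p-1})$.

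I do not foresee any genuine obstacle here: the entire argument reduces to the cancellation of the leading terms of two polynomials that differ only by a shift in their arguments, together with the elementary fact that $\sum_{j=1}^{k} j^{p-2} = O(k^{p-1})$. The Pascal/telescoping reformulation is merely a cosmetic convenience that sidesteps any explicit coefficient computation and makes the degree drop from $p-1$ to $p-2$ manifest.
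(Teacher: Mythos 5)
Your proposal is correct and follows essentially the same route as the paper: both start from the closed form $Q_p(k)=\frac{1}{p}\bigl[\binom{k+p-1}{p-1}-\binom{k-1}{p-1}\bigr]$, observe that the two degree-$(p-1)$ polynomials share the leading term $k^{p-1}/(p-1)!$ so it cancels, and then pass to $P_p$ by summing (the paper uses the cruder bound $P_p(k)\leqslant 1+kQ_p(k)$, while your exact summation via the hockey-stick identity is a slightly tidier version of the same step). The Pascal-rule telescoping is a nice cosmetic touch but not a genuinely different argument.
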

\begin{proof}
    This result follows from the formula
    $$
    Q_p(k) = \dfrac{1}{p} \left[ {{k + p - 1} \choose {p-1}} - {{k-1} \choose {p-1}} \right].
    $$
    To do this, we expand the binomial coefficients
    $$
    \dfrac{1}{p}{{k + p - 1} \choose {p-1}} - {{k-1} \choose {p-1}} = \dfrac{1}{p!} \left[ (k + p - 1) \ldots (k + 1) - (k-1) \ldots (k - p + 1) \right].
    $$
    From this we obtain that the coefficient at $k^{p-1}$ is equal to 0, hence $Q_p(k) = O(k^{p-2})$.
    The asymptotic for $P_p(k)$ can be obtained from a rough estimate $P_p(k) \leqslant 1 + k Q_p(k)$.
\end{proof}

\section{Numerical calculation of growth polynomials}
\begin{lemma}
    $Q_n(k) = O(k^{n-2})$ and $P_n(k) = O(k^{n-1})$ for $k \to +\infty$.
\end{lemma}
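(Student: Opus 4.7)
The plan is to mirror the upper-bound half of the proof of Lemma \ref{lemma3}, since the only step that truly required $n = p$ was the linear independence of $\varepsilon, \varepsilon^2, \ldots, \varepsilon^{p-1}$ over $\mathbb{Q}$, and that was invoked only to establish \emph{uniqueness} of representatives. For an $O$-bound it is enough to have a \emph{surjective} parametrization of $\widetilde{A_k}$ by a set whose size is easy to count.

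First I would check that, for general $n$, every element of $\widetilde{A_k}$ is still represented by some weak composition $(b_0, \ldots, b_{n-1}) \in \overline{C(k,n)}$ with $\min_i b_i = 0$. This is verbatim the first paragraph of the proof of Lemma \ref{lemma3}: the relation $1 + \varepsilon + \varepsilon^2 + \ldots + \varepsilon^{n-1} = 0$ holds for every $n \geq 2$, so if a representative of $z_0^n \in \widetilde{A_k}$ had $d := \min_i b_i > 0$, subtracting $d$ from each coefficient would exhibit $z_0 \in B_{k - nd}$ and hence $z_0^n \in A_{k - nd}$, contradicting membership in $\widetilde{A_k}$.

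Next I would count such compositions. Weak compositions of $k$ into $n$ parts number $\binom{k+n-1}{n-1}$, while those with every part at least $1$ number $\binom{k-1}{n-1}$, so weak compositions with at least one zero entry number
\[
N(k) := \binom{k+n-1}{n-1} - \binom{k-1}{n-1}.
\]
Both summands are degree-$(n-1)$ polynomials in $k$ with the same leading coefficient $1/(n-1)!$, so the $k^{n-1}$ terms cancel; expanding $(k+n-1)\cdots(k+1)$ and $(k-1)\cdots(k-n+1)$ shows the $k^{n-2}$ coefficients are $+\tfrac{n(n-1)}{2}$ and $-\tfrac{n(n-1)}{2}$ respectively, so they survive and $N(k) = O(k^{n-2})$. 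Since the parametrization is surjective, $Q_n(k) \leq N(k) = O(k^{n-2})$, and then $P_n(k) = 1 + \sum_{j=1}^{k} Q_n(j) = O(k^{n-1})$ follows by summation.

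I do not foresee a genuine obstacle. The only thing to be slightly careful about is verifying that the $k^{n-2}$ terms in $N(k)$ do not also cancel — but the explicit computation above makes this routine. The generalization works precisely because asymptotics require only an upper bound on $|\widetilde{A_k}|$, so the loss of the uniqueness half of Lemma \ref{lemma3} in the composite case costs us nothing.
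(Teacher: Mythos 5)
Your proof is correct, and it takes a slightly cleaner route than the paper's. The paper also starts from the observation that for composite $n$ only the existence half of Lemma \ref{lemma3} survives, but it then injects $\widetilde{A_k}$ into the set $\widetilde{C(k,n)}$ of \emph{cyclic-shift classes} of zero-containing weak compositions, bounds $Q_n(k)$ by $\left\langle\begin{matrix}k+n\\ n\end{matrix}\right\rangle - \left\langle\begin{matrix}k\\ n\end{matrix}\right\rangle$, and must then argue via the Knopfmacher--Robbins formula that the extra terms $\binom{(k+n)/j}{n/j}$ and $\binom{k/j}{n/j}$ appearing when $\gcd(k,n)>1$ are of lower order. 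You drop the cyclic quotient entirely and count raw weak compositions with a zero entry, $N(k)=\binom{k+n-1}{n-1}-\binom{k-1}{n-1}$, where the cancellation of the $k^{n-1}$ terms is immediate and no case analysis on $\gcd(k,n)$ is needed. The price is a constant factor of roughly $n$ in the bound, which is irrelevant for an $O$-estimate; the gain is that you avoid the totient-sum formula altogether. Your extra check that the $k^{n-2}$ coefficients do not also cancel is not needed for the stated $O$-bound, but it is harmless and in fact shows the exponent $n-2$ is sharp for $N(k)$.
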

\begin{proof}
    In the case of an arbitrary $n$, Lemma $\ref{lemma3}$ is only partially true; that is, the part about uniqueness is not true. So there is an injection from $\widetilde{A_k}$ into $\widetilde{C(k, n)}$, hence $$Q_n(k) \leqslant \left\langle
    \begin{matrix}
      k+n  \\
      n  \\
    \end{matrix}
\right\rangle - \left\langle
    \begin{matrix}
      k  \\
      n  \\
    \end{matrix}
\right\rangle$$
If $\gcd(k, n) = 1$ then, as we have already found out in the proof, in the case of a prime $n$ we obtain $Q_n(k) = O(k^{n-2})$ for $k \to + \infty$.
In the general case, additional terms of the form ${{(k+n)/j} \choose {n/j}}$ and ${{k/j} \choose {n/j}}$ are added, but they have smaller asymptotic of $O(k^{n/j})$.
\end{proof}
Now, we calculate the interpolation polynomial $\overline{P_n(k)}$ for $P_n(k)$ over the nodes $0, \ldots, n-1$. Since from the estimate we have $P_n(k) = O(k^{n-1})$, for prime $n$ we can guarantee that $\overline{P_n(k)} \equiv P_n(k)$:
\begin{align*}
    \overline{P_2(k)} &= k+1, \\
    \overline{P_3(k)} &= 1 + \frac{1}{2}(k + k^2) = \overline{P_4(k)} = \overline{P_6(k)}, \\
    \overline{P_5(k)} &= 1 + \frac{5}{12}(k + k^2) + \frac{1}{24}(k + k^2)^2, \\
    \overline{P_7(k)} &= 1 + \frac{7}{20}(k + k^2) + \frac{13}{180}(k + k^2)^2 + \frac{1}{720}(k + k^2)^3, \\
    \overline{P_8(k)} &= 1 + \frac{1}{3}(k + k^2) + \frac{1}{12}(k + k^2)^2, \\
    \overline{P_9(k)} &= 1 + \frac{3}{10}(k + k^2) + \frac{11}{120}(k + k^2)^2 + \frac{1}{240}(k + k^2)^3, \\
    \overline{P_{10}(k)} &= 1 + \frac{1}{4}(k + k^2) + \frac{1}{8}(k + k^2)^2 = \overline{P_{12}(k)}.
\end{align*}
\section{Conclusions and further study}
We found an interesting relation of cyclic compositions to the task of counting images of n-valued map $T^k(0)$ and using this relation found a specific formula for the case, when $n$ is prime. From our numerical results, the following hypotheses and questions arise:
\begin{enumerate}
    \item The degree of $\overline{P_n(k)}$ is $\varphi(n)$ (we see it from numerical results), we suggest that it is related to the fact that the degree of cyclotomic field $\mathbb{Q}(\varepsilon)$ as an extension of $\mathbb{Q}$ is equal to $\varphi(n)$;
    \item We have the same polynomials for some $n$, we want to know for which exactly;
    \item If $n$ is prime, the coefficient at the highest degree is $\frac{1}{(n-1)!}$;
    \item Polynomials depend only on $(k + k^2)$ (or equivalently ${{k+1} \choose 2}$) (exception -- $n=2$).
    \item Polynomials $\overline{P_n(k)}$ are integer-valued (it can be easily proven using Theorem 3.2.1 from \cite{Prasolov2004}). So we can write any integer-valued polynomial $p_d$ of degree $d$ as
    $$
    p_d(x) = c_0 {x \choose d} + c_1 {x \choose {d-1}} + \ldots + c_d, \ c_j \in \mathbb{Z}
    $$
    We are interested in $c_j$ for our polynomials $\overline{P_n(k)}$.
\end{enumerate}
In further works we are going to prove hypothesis 1 (that $P_n(k)$ is polynomial and $\deg P_n(k) = \varphi(n)$), and answer some questions about the form of $P_p(k)$, where $p$ is prime.

\section*{Acknowledgements}
Special thanks to V. M. Buchstaber for formulation of the problem, our fruitful discussions and his helpful advises.
This work was carried out within the framework of a development programme for the Regional Scientific and Educational Mathematical Center of the Yaroslavl State University with financial support from the Ministry of Science and Higher Education of the Russian Federation (Agreement on provision of subsidy from the federal budget No. 075-02-2023-948).

\bibliographystyle{plain}
\bibliography{main}

\begin{thebibliography}{10}

\bibitem{Bukhshtaber1990}
V.~M. Buchstaber.
\newblock Functional equations associated with addition theorems for elliptic
  functions and two-valued algebraic groups.
\newblock {\em Russian Mathematical Surveys}, 45(3):213--215, June 1990.

\bibitem{Buchstaber2006}
V.~M. Buchstaber.
\newblock $n$-valued groups: Theory and applications.
\newblock {\em Moscow Math. J.}, 6(1):57--84, 2006.

\bibitem{Buchstaber2018}
V.~M. Buchstaber and V.~Dragovi{\'{c}}.
\newblock Two-valued groups, kummer varieties, and integrable billiards.
\newblock {\em Arnold Mathematical Journal}, 4(1):27--57, April 2018.

\bibitem{Buhtaber1971}
V.~M. Buchstaber and S.~P. Novikov.
\newblock {FORMAL} {GROUPS}, {POWER} {SYSTEMS} {AND} {ADAMS} {OPERATORS}.
\newblock {\em Mathematics of the {USSR}-Sbornik}, 13(1):80--116, February
  1971.

\bibitem{Buchstaber2004}
V.~M. Buchstaber and E.~G. Rees.
\newblock Rings of continuous functions, symmetric products, and frobenius
  algebras.
\newblock {\em Russian Mathematical Surveys}, 59(1):125--145, February 2004.

\bibitem{Buchstaber1996}
V.~M. Buchstaber and A.~P. Veselov.
\newblock Integrable correspondences and algebraic representations of
  multivalued groups.
\newblock {\em International Mathematics Research Notices}, 1996(8):381, 1996.

\bibitem{Buchstaber2019}
V.~M. Buchstaber and A.~P. Veselov.
\newblock Conway topograph, $\mathrm{PGL}_2(\mathbb{Z})$-dynamics and
  two-valued groups.
\newblock {\em Russian Mathematical Surveys}, 74(3):387--430, June 2019.

\bibitem{Ireland1982}
Kenneth Ireland and Michael Rosen.
\newblock {\em A Classical Introduction to Modern Number Theory}.
\newblock Springer New York, 1982.

\bibitem{KnopfmacherRobbins}
A.~Knopfmacher and N.~Robbins.
\newblock Some properties of cyclic compositions.
\newblock {\em The Fibonacci Quarterly}, 48(3):249--255, 2010.

\bibitem{Prasolov2004}
V.~V. Prasolov.
\newblock {\em Polynomials}.
\newblock Springer Berlin Heidelberg, 2004.

\bibitem{Veselov1991}
A.~P. Veselov.
\newblock Integrable maps.
\newblock {\em Russian Mathematical Surveys}, 46(5):1--51, October 1991.

\bibitem{Veselov1992}
A.~P. Veselov.
\newblock Growth and integrability in the dynamics of mappings.
\newblock {\em Communications in Mathematical Physics}, 145(1):181--193, March
  1992.

\end{thebibliography}
\end{document}